\newcommand{\TheTitle}{Convex Geometry of the Generalized matrix-fractional Function} 
\newcommand{\TheAuthors}{J. V. Burke, Y. Gao, and T. Hoheisel}
\headers{\TheTitle}{\TheAuthors}
\title{{\TheTitle}}
\author{
	James V. Burke\thanks{Department of Mathematics, University of Washington, Seattle, WA 98195 (\email{jvburke@uw.edu}). Research is supported in part by the National Science Foundation under
	grant number DMS­-1514559.} 
	\and
	Yuan Gao\thanks{Department of Applied Mathematics, University of Washington, Seattle, WA 98195 (\email{yuangao@uw.edu}).}
	\and
	Tim Hoheisel\thanks{McGill University, 805 Sherbrooke St West, Room1114, Montr\'eal, Qu\'ebec, Canada H3A 0B9 \email{tim.hoheisel@mcgill.ca})}
}
\def\del{\delta}
\newcommand{\aff}{\mathrm{aff}\,}
\newcommand{\co}{\mathrm{conv}\,}
\newcommand{\cl}{\mathrm{cl}\,}
\newcommand{\clco}{\mathrm{\overline{conv}}\,}
\newcommand{\dom}{\mathrm{dom}\,}
\newcommand{\epi}{\mathrm{epi}\,}
\newcommand{\inter}{\mathrm{int}\,}
\newcommand{\rint}{\mathrm{rint}\,}
\newcommand{\rge}{\mathrm{rge}\,}
\newcommand{\lin}{\mathrm{span}\,}
\newcommand{\R}{\mathbb{R}}
\newcommand{\Rn}{\R^n}
\newcommand{\Rnm}{\R^{n\times m}}
\newcommand{\bS}{\mathbb{S}}
\newcommand{\Sn}{\bS^n}
\newcommand{\rank}{\mathrm{rank}\,}
\newcommand{\eR}{\mathbb R\cup\{+\infty\}}
\newcommand{\bR}{\mathbb{R}}
\def\Rn{\bR^n}
\def\Rnm{\bR^{n\times m}}
\def\bS{\mathbb{S}}
\def\Sn{\bS^n}
\def\Snp{\bS^n_+}
\def\Snpp{\bS_{++}^n}
\newcommand{\tr}{\mathrm{tr}\,}
\newcommand{\cone}{\mathrm{cone}\;}
\newcommand{\bN}{\mathbb{N}}
\newcommand{\bE}{\mathbb{E }}
\newcommand{\map}[3]{#1 :#2\rightarrow #3}
\newcommand{\ip}[2]{\left\langle #1,\, #2\right\rangle}
\newcommand{\half}{\frac{1}{2}}
\newcommand{\norm}[1]{\left\Vert #1\right\Vert}
\newcommand{\set}[2]{\left\{#1\,\left\vert\; #2\right.\right\}}
\newcommand{\ncone}[2]{N_{#2}\left(#1\right)}
\newcommand{\support}[2]{\sig_{#2}\left(#1\right)}
\newcommand{\indicator}[2]{\del_{#2}\left(#1\right)}
\newcommand{\gauge}[2]{\gam_{#2}\left(#1\right)}
\def\sd{\partial}
\newcommand{\bt}{{\bar{t}}}
\newcommand{\gam}{\gamma}
\newcommand{\sig}{\sigma}
\newcommand{\Sig}{\Sigma}
\newcommand{\lam}{\lambda}
\def\eps{\epsilon}
\newcommand{\cD}{\mathcal{D}}
\newcommand{\cE}{\mathcal{E}}
\newcommand{\cK}{\mathcal{K}}
\newcommand{\cS}{\mathcal{S}}
\newcommand{\hZ}{{\widehat{Z}}}
\newcommand{\AND}{\ \mbox{ and }\ }
\begin{document}

\maketitle

\begin{abstract}
Generalized matrix-fractional (GMF) functions are a class of matrix
support functions introduced by Burke and Hoheisel
as a tool for unifying a range of seemingly divergent matrix optimization
problems associated with   
inverse problems, regularization and learning.
In this paper we dramatically simplify the support function representation for GMF functions
as well as the representation of their subdifferentials. 
These new representations allow the ready computation of a range of important 
related geometric objects whose formulations were previously unavailable.
\end{abstract}

\begin{keywords} matrix optimization, 
matrix-fractional function, support function, gauge function
\end{keywords}

\begin{AMS}
  68Q25, 68R10, 68U05
\end{AMS}

\section{Introduction}
Generalized matrix-fractional (GMF) functions were introduced in \cite{Bur15}
as a means to unify a range of seemingly divergent tools in matrix
optimization related to inverse problems, regularization and
machine learning. Somewhat surprisingly GMF functions
coincide with the negative of the optimal value function for affinely constrained
quadratic programs, and are representable as
support functions on the matrix space $\bE:=\Rnm\times\Sn$,
where $\Rnm$ and $\Sn$ are the vector spaces of real $n\times m$ and symmetric
$n\times n$ matrices, respectively.
The most significant challenge in \cite{Bur15} is the derivation of an expression
for the closed convex set associated with the support function representation. 
Unfortunately, the representation given in \cite{Bur15}
is exceedingly complicated. The main contribution
of this paper is to provide a simple, elegant, and intuitive representation for this set.
We then use this representation to provide a simplified expression for the
subdifferential of a GMF function and to compute various related geometric objects
that were previously unavailable. These representations dramatically simplify
the use of these tools to a wide range of applications \cite{BGH-applications17}.
Before proceeding, we review the definition of a GMF function.

Given $(A,B)\in \R^{p\times n}\times\R^{p\times m}$ with
$\rge B\subset \rge A$, the graph of the matrix valued mapping 
$Y\mapsto -\half YY^T$ over an affine manifold $\set{Y\in\Rnm}{AY=B}$
is given by
\begin{equation}\label{eq:D}
\cD(A,B) :=\set{\left(Y,-\half YY^T\right)\in \bE}{Y\in \mathbb R^{n\times m}:\;AY=B}.
\end{equation}
The associated GMF function is the
support function of the set $\cD(A, B)$:
\[
\sigma_{ \cD (A,B)} (X,V) = \sup_{(Y,W)\in\cD(A,B)}\ip{(X,V)}{(Y,W)},
\]
where we use the Frobenius inner product on $\bE$, 
\[
\ip{(Y,W)}{(X,V)}=\tr(Y^TX)+\tr{WV}=\tr(XY^T+WV).
\]
In \cite[Theorem 4.1]{Bur15}, it is shown that
\begin{equation}\label{eq:support D}
\sigma_{ \cD (A,B)} (X,V) = 
\left\{\begin{array}{lcl}\frac{1}{2} \tr\!\left( \binom{X}{B}^TM(V)^\dagger \binom{X}{B}  \right) 
& {\rm if} \ \; \rge \binom{X}{B}\subset \rge M(V), \; V\in\cK_A ,\\
+\infty & {\rm else},
\end{array}\right.
\end{equation}
where $\cK_A:=\set{V\in\Sn}{u^TVu\ge 0\ (u\in \ker A)}$ and
$M(V)^\dagger$ is the Moore-Penrose pseudo inverse of the matrix
\[
M(V)=\begin{pmatrix}V&A^T\\ A&0\end{pmatrix}.
\]
In particular, this implies that
\begin{equation}\label{eq:dom support D}
\begin{aligned}
\dom \sigma_{D(A,B)}&=\dom \partial \sigma_{D(A,B)}\\
& =
\set{(X,V)\in \R^{n\times m}\times \Sn}{  \rge \binom{X}{B}\subset \rge M(V), \; 
V\in\cK_A   }.
\end{aligned}
\end{equation}
Note that $\dom \sigma_{D(A,B)}$ is clearly not a closed set.
To see this consider the case
$A=B=0$ and $V=\eta I$ so that 
any $X\ne 0$ has $\rge X\in\rge V$.
But as $\eta\downarrow 0$ it is not the case that $\rge X\subset \rge 0$.
Consequently, the statement in 
\cite[Theorem 4.1]{Bur15} that this domain is closed is clearly false.
This error does not affect the validity of the other results in \cite{Bur15}
since none of them require that the set $\dom \sigma_{D(A,B)}$
be closed.

The representation \eqref{eq:support D} is the basis for the name {\it generalized
matrix-fractional function} since the matrix-fractional functions 
\cite{BoV 04,Dat 05,Gal 11,HsO 14} are obtained when 
the matrices $A$ and $B$ are both taken to be zero.

The paper is organized as follows:
Section \ref{sec:omega} begins with a study of the cones $\cK_A$
defined in \eqref{eq:K} and their polars. This is immediately followed
by deriving our new representation of the set $\Omega(A,B):=\clco \cD(A,B)$ in
Theorem \ref{thm:clco D}. With this representation in hand,
we derive new simplified descriptions  for the normal cone  $N_{\Omega(A,B)}$
and the subdifferential $\sd \sig_{\Omega(A,B)}$ in Section \ref{sec:ncone}.
In Section \ref{sec:geometry} we explore the convex geometry of  the set
$\Omega(A,B)$, and conclude in Section \ref{sec:gauge} with the important
special case where $B=0$ and $\sig_{\Omega(A,0)}$ is a gauge function.
\\
\\
{\bf Notation:}  Let $\cE$ be a finite dimensional Euclidean space with inner product denoted by 
$\ip{\cdot}{\cdot}$ and the induced norm $\|\cdot\|:=\sqrt{\ip{\cdot}{\cdot}}$ 
with the closed
$\eps$-ball about a point $x\in\cE$ denoted by $B_\eps(x)$.  
%
Let   $S\subset\cE$ be nonempty.   The (topological) {\em closure} and {\em interior} of  $S$ are denoted by $\cl S$ and $\inter S$,  respectively.  The {\em (linear) span} of $S$  will be denoted by $\lin S$.

The {\em convex hull} of $S$ is the set of all convex combinations of elements of $S$ and is denoted by $\co S$.  Its closure (the {\em closed convex hull}) is $\clco S:=\cl(\co S)$.   The {\em conical  hull} of $S$ is the set 
\[
\R_+S:=\set{\lambda x}{x\in S,\;\lambda\geq 0}.
\]
The {\em convex conical hull} of $S$ is 
\[
\cone S:=\set{\sum_{i=1}^r\lambda_i x_i}{r\in \bN,\; x_i\in S,\;\lambda_i\geq 0}.
\]
It is easily seen that $\cone S=\R_+(\co S)=\co(\R_+ S)$. The closure of the latter is 
$
\overline \cone S:=\cl (\cone S).
$
The {\em affine hull} of $S$, denoted by $\aff S$,  is the smallest affine space that contains $S$. 

The {\em relative interior} of a convex set $C\subset \cE$ is its interior in the relative topology with respect to the affine hull, i.e.
\[
\rint C=\set{x\in C}{\exists \varepsilon>0: \;B_\varepsilon(x)\cap \aff C\subset C}.
\]
It is well known, see e.g. \cite[Section 6.2]{BaC11}, that the points $x\in \rint C$ are characterized through  
\begin{equation}\label{eq:RintChar}
\R_+(C-x)=\lin (C-x),
\end{equation}
where the latter is the (unique) subspace parallel to $\aff C$. In particular, we have $\R_+C=\aff C=\lin C$ if and only if $0\in \rint C$.

The {\em polar set} of $S$ is defined by 
\[
S^\circ:=\set{v\in \cE}{\ip{v}{x}\leq 1\;(x\in S)}.
\]
Moreover, we define the {\em bipolar  set} of $S$ by  $S^{\circ\circ}:=(S^{\circ})^\circ$. It is well known that $S^{\circ \circ}=\overline \cone(S\cup \{0\})$. 
If $K\subset \cE$ is  a  cone (i.e. $\R_+K\subset K$)  it can be seen by a homogeneity argument that  
\[
K^\circ =\set{v\in \cE}{\ip{v}{x}\leq 0\;(x\in K)},
\]
and if $\cS\subset\cE$ is a subspace, $\cS^\circ$ is the orthogonal subspace $\cS^\perp$.
The {\em horizon cone} of $S$ is the set 
\[
S^\infty:=\set{v\in \cE}{\exists \{\lambda_k\}\downarrow 0,\; \{x_k\in S\}:\; \lambda_k x_k\to v}
\]
which is always a closed cone. For a convex set $C\subset \cE$, $C^\infty$ 
coincides with the {\em recession cone} of the closure of $C$, i.e.
\[
C^\infty=\set{v}{x+tv\in \cl C\;(t\geq 0, \;x\in C)}
=\set{y}{C+y\subset C}.
\]


\noindent
For $\map{f}{\cE}{\eR}$ its {\em domain} and {\em epigraph} are given by
\[
\dom f:=\set{x\in\cE}{ f(x)<+\infty}\AND \epi f:=\set{(x,\alpha)\in\cE\times \R}{f(x)\leq \alpha}.
\]
We call $f$ {\em convex} if its epigraph $\epi f$ is a convex set. 

For a convex  function $\map{f}{\cE}{\eR}$ its  {\em subdifferential} at a point $\bar x\in \dom f$ is given by
\[
\partial f(\bar x):=\set{v\in \cE}{f(x)\geq f(\bar x)+\ip{v}{x-\bar x}}.
\]
Given a nonempty set $S\subset \cE$, its {\em indicator function} $\delta_S:\cE\to\eR$ is given by 
\[
\indicator{x}{S} :=\left\{\begin{array}{rcl}0 & \text{if} & x\in S,\\
+\infty &  \text{if} & x\notin S.
\end{array}\right.
\] 
The indicator of $S$ is convex if and only if  $S$ is a convex set, in which case the {\em normal cone} of $S$ at $\bar x\in S$ is given by
\[
\ncone{\bar x}{S}:=\partial \delta_S(\bar x)=\set{v\in \cE}{\ip{v}{x-\bar x}\leq 0 \;(x\in S)}.
\]
The  {\em support function} $\sigma_S:\cE\to \eR$ and the {\em gauge function} $\gamma_S:\cE\to\eR$ of a nonempty set $S\subset \cE$ are given by 
\[
 \support{x}{S}:=\sup_{v\in S}\ip{v}{x}\AND \gauge{x}{S}:=\inf\set{t \geq  0}{x\in tS},
\]
respectively. Here we use the standard convention that  $\inf \emptyset=+\infty$. 
It is easy to see that 
\begin{equation}\label{eq:support sd 1}
\sigma_S=\sigma_{\clco S}.
\end{equation}

\section{New Representation of {\boldmath$\clco\cD(A, B)$}}
\label{sec:omega}

\noindent
In view of \eqref{eq:support sd 1}, in order to obtain a complete understanding of the variational properties of $\sigma_S$, it is critical to have a useful description of the closed convex hull $\clco S$. This is often a non-trivial task.
In \cite[Proposition 4.3]{Bur15}, a representation for $\clco\cD(A,B)$ is
obtained after great effort, and this representation is arduous. Although it
is successfully used in \cite[Section 5]{Bur15} in several important situations,
the representation is an obstacle to a deeper understanding of the function
$\sigma_{ \cD (A,B)}$ as well as its ease of use in applications. The focus of this section
is to provide a new and intuitively appealing representation that dramatically
facilitates the use of $\sigma_{ \cD (A,B)}$. 
The key to this new representation is the class of cones 
\begin{equation}\label{eq:K}
\cK_\cS:=\set{V\in\Sn}{u^TVu \ge 0,\;(u \in \cS)},
\end{equation}
where $\cS$ is a subspace of $\Rn$, that is,
$\cK_\cS$
is the set of all symmetric matrices that are positive definite with respect to the given subspace $\cS$.
Observe that if $P\in\bS^n$ is the orthogonal projection onto $\cS$, then
\begin{equation}\label{eq:KAlt}
\cK_\cS=\set{V\in\bS^n}{PVP\geq 0}.
\end{equation}

\noindent
Clearly, $\cK_\cS$ is a convex cone, and, for $\cS=\R^n$, it reduces to $\bS_+^n$. 
Given a matrix $A\in \R^{p\times n}$, the cones $\cK_{\ker A}$ play a special role
in our analysis. For this reason, we simply write  $\cK_A$ to denote $\cK_{\ker A}$,
i.e. $\cK_A:=\cK_{\ker A}$.


\begin{proposition}[$\cK_\cS$ and its polar]\label{prop:Snp and polar}
Let $\cS$ be a nonempty subspace of $\Rn$ and let $P$ be the orthogonal
projection onto $\cS$. Then the following hold:
\begin{itemize}
\item[a)]
$
\cK_\cS^{\circ}=\cone \set{-vv^T}{v\in \cS}=\set {W\in \bS^n}{W=PWP\preceq  0}.
$
\item[b)]
$
\inter \cK_\cS= \set{V\in\Sn}{u^TVu> 0\;(u\in \cS\setminus\{0\})}.
$

\item[c)] 
$
\aff (\cK_\cS^\circ)=\lin \set{vv^T}{v\in \cS}= \set{W\in \bS^n}{\rge W\subset \cS}.
$

\item[d)]  
$
\rint(\cK_\cS^{\circ})=\set{W\in\cK_\cS^{\circ}}{u^TWu<0\ \;(u\in \cS\setminus\{0\})}
$
when $\cS\ne\{0\}$ and 

$\rint(\cK_{\{0\}}^{\circ})=\{0\}$ (since $\cK_{\{0\}}=\Sn$).

\end{itemize}
\end{proposition}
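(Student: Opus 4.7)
The plan is to establish part (a) first via the bipolar theorem and then use it to drive the remaining parts. The Frobenius identity $u^TVu=\ip{V}{uu^T}$ rewrites the defining condition of $\cK_\cS$ as $\ip{V}{-uu^T}\le 0$ for every $u\in\cS$, so by inspection
\[
\cK_\cS \;=\; \bigl(\cone\{-uu^T\mid u\in\cS\}\bigr)^\circ.
\]
Let $C:=\cone\{-uu^T\mid u\in\cS\}$. The bipolar theorem gives $\cK_\cS^\circ = C^{\circ\circ}= \cl C$. To avoid taking the closure, I will show directly that $C=\{W\in\bS^n\mid W=PWP\preceq 0\}$, which is visibly closed (intersection of a subspace and the negative semidefinite cone). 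The ``$\subset$'' direction is immediate since $v\in\cS$ yields $Pv=v$, so $-vv^T=-Pvv^TP$. For ``$\supset$'', given $W=PWP\preceq 0$, write $-W=\sum\lambda_i w_iw_i^T$ with $\lambda_i\ge 0$ via spectral decomposition; since $\rge W\subset\cS$ all eigenvectors associated with $\lambda_i>0$ lie in $\cS$, so $W\in C$ after absorbing $\sqrt{\lambda_i}$ into $w_i$.

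For part (b), one direction is a compactness argument: if $u^TVu>0$ for every $u\in\cS\setminus\{0\}$, then $\alpha:=\min\{u^TVu\mid u\in\cS,\ \|u\|=1\}>0$, and any $V'$ with $\|V'-V\|_F<\alpha$ stays in $\cK_\cS$. Conversely, if some $\bar u\in\cS\setminus\{0\}$ satisfies $\bar u^TV\bar u=0$, the perturbation $V-\eps\bar u\bar u^T$ leaves $\cK_\cS$ for every $\eps>0$, so $V\notin\inter\cK_\cS$. Part (c) then falls out of (a): because $0\in\cK_\cS^\circ$ we have $\aff(\cK_\cS^\circ)=\lin(\cK_\cS^\circ)=\lin\{vv^T\mid v\in\cS\}$. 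The remaining identification with $\{W\in\bS^n\mid\rge W\subset\cS\}$ is routine — the inclusion ``$\subset$'' is clear, and the reverse follows by spectrally decomposing any such $W$ and observing that all eigenvectors with nonzero eigenvalues lie in $\rge W\subset\cS$.

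For part (d), the case $\cS=\{0\}$ is immediate since $\cK_{\{0\}}^\circ=\{0\}$. For $\cS\ne\{0\}$, I will use the characterization \eqref{eq:RintChar} in conjunction with (c), which tells us that the relevant ambient subspace is $L:=\{W\in\bS^n\mid\rge W\subset\cS\}$. The cleanest route is to choose an orthonormal basis of $\cS$ arranged as the columns of $U\in\R^{n\times k}$ (with $k=\dim\cS$), and to note that $W\mapsto U^TWU$ is a linear isomorphism $L\to\bS^k$ that sends $\cK_\cS^\circ$ onto $-\bS^k_+$ and sends the set in the claimed right-hand side of (d) onto $-\bS^k_{++}$. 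Then the claim reduces to the classical fact $\rint(-\bS^k_+)=-\bS^k_{++}$.

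The main obstacle is the closedness of $C=\cone\{-vv^T\mid v\in\cS\}$, which is what permits the clean polar identity in (a) without trailing closures; the trick is to sidestep a direct closedness argument by producing the explicit description $\{W=PWP\preceq 0\}$ and leaning on the spectral theorem. Once (a) is in hand, parts (b), (c), and (d) are relatively standard adaptations of known facts about $\bS^n_+$ to the subspace $\cS$.
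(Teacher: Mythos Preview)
Your proof is correct. Parts (a)--(c) proceed essentially as the paper does: the same Frobenius identity $u^TVu=\ip{V}{uu^T}$ to exhibit $\cK_\cS$ as a polar, the same spectral-decomposition trick to identify $\cone\{-vv^T\mid v\in\cS\}$ with the closed set $\{W=PWP\preceq 0\}$ and thus avoid a separate closure argument, and the same observation that $0\in\cK_\cS^\circ$ forces $\aff(\cK_\cS^\circ)=\lin(\cK_\cS^\circ)$. Your treatment of (b) is slightly more explicit than the paper's, which simply remarks that the argument mirrors the proof of $\inter\bS^n_+=\bS^n_{++}$.

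Part (d) is where you genuinely diverge. The paper argues by a direct double inclusion: if $W\in\rint(\cK_\cS^\circ)$ but $u^TWu=0$ for some unit $u\in\cS$, then $W+\eps uu^T$ lies in $\aff(\cK_\cS^\circ)\cap B_\eps(W)$ yet outside $\cK_\cS^\circ$, a contradiction; and conversely, if $W$ is strictly negative on $\cS$ but not in the relative interior, a sequence $W_k\to W$ in $\aff(\cK_\cS^\circ)\setminus\cK_\cS^\circ$ together with witnesses $u_k\in\cS$ and a compactness pass yields a contradiction. Your route instead picks an orthonormal basis $U$ of $\cS$ and uses the linear isomorphism $W\mapsto U^TWU$ from $L=\{W\mid\rge W\subset\cS\}$ onto $\bS^k$ to transport $\cK_\cS^\circ$ to $-\bS^k_+$ and the claimed right-hand side to $-\bS^k_{++}$, reducing everything to the classical $\rint(-\bS^k_+)=-\bS^k_{++}$. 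Your approach is cleaner and makes the structural reason transparent (the problem is, up to isomorphism, the standard one on $\bS^k$); the paper's approach is more self-contained and does not invoke preservation of relative interiors under linear isomorphisms. One small remark: you announce that you will use the characterization \eqref{eq:RintChar}, but your actual argument does not use it---the isomorphism alone carries the weight.
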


\begin{proof} \hfill\\
\begin{itemize}
\item[a)] Put $B:=\set{-ss^T}{s\in \cS}\subset\bS^n_-$ and observe that 
\[
\cone B=\set{-\sum_{i=1}^r\lambda_i s_is_i^T}{r\in \bN, s_i\in \cS,\lambda_i\geq 0\;(i=1,\dots,r)}.
\] 
We have $\cone B= \set{W\in \bS_-^n}{W=PWP}$: To see this, first note that
$\cone B \subset \set{W\in \bS_-^n}{W=PWP}$. The reverse inclusion invokes the spectral decomposition of $W=\sum_{i=1}^n\lambda_i q_iq_i^T$ for $\lambda_1,\dots,\lambda_n\leq  0$.  
In particular, this representation of $\cone B$ shows that it is closed.  
We now prove the first equality in a): To this end, observe that 
\begin{eqnarray*}
\cK_\cS & = & \set{V\in \bS^n}{s^TVs\geq 0\;(s\in \cS)}\\
& = & \set{V\in \bS^n}{\ip{V}{-ss^T}\leq 0\;(s\in \cS)}\\
& = & (\cone B)^\circ,
\end{eqnarray*}
where the third equality uses simply the  linearity of the inner product in the second argument. Polarization then gives 
\[
\cK_\cS^\circ=(\cone B)^{\circ\circ}=\overline\cone B=\cone B.
\]
 

\item[b)] The proof is straightforward and follows the pattern of proof
for $\inter \Snp=\Snpp$.

\item[c)] With $B$ as defined above, observe that
\[
\aff \cK_\cS^\circ =\lin \cK_\cS^\circ=\lin B,
\] 
since $0\in \cK_\cS^\circ$, which shows the first equality.  It is hence obvious that 
$\aff \cK_\cS\subset \set{W\in \bS^n}{\rge W\subset \cS}$. 
On the other hand, every $W\in \bS^n$ such that $\rge W\subset \cS$ 
has a decomposition 
$W=\sum_{i=1}^{\rank W}\lambda_i q_iq_i^T$ where $\lambda_i\neq 0$ and 
$q_i\in  \rge W\subset \cS$ for all $i=1,\dots,\rank W$, i.e. $W\in \lin B=\aff \cK_\cS^\circ$.

\item[d)] 
Set $R:=\set{W\in\cK_\cS^{\circ}}{u^TWu<0\ \;(u\in \cS\setminus\{0\})} $ and let 
$W\in \rint(\cK_\cS^\circ)\setminus R\subset \cK_\cS^\circ$. 
Then there exists $u\in \cS$ with  $\|u\|=1$ such that $u^TWu=0$. 
Then   for every $\varepsilon>0$ we have  $u^T(W+\varepsilon uu^T)u=\varepsilon>0$. 
Therefore $W+\varepsilon uu^T\in (B_\varepsilon(W)\cap 
\aff (\cK_\cS^\circ))\setminus \cK_\cS^\circ$ 
for all $\varepsilon>0$, and hence $W\notin \rint(\cK_\cS^\circ)$, which contradicts our 
assumption. Hence, $\rint(\cK_\cS^\circ)\subset R$.

To see the reverse implication assume there were $W\in R\setminus \rint(\cK_\cS^\circ)$, i.e.  for all $k\in \bN$ there exists $W_k\in B_{\frac{1}{k}}(W)\cap \aff 
(\cK_\cS^\circ)\setminus \cK_\cS^\circ.$   
In particular, there exists $\set{u_k\in \cS}{\norm{u_k}=1}$ such that $u_k^TW_ku_k\ge 0$ 
for all $k\in \bN$. W.l.o.g. we can assume that $u_k\to u\in \cS\setminus\{0\}$. 
Letting $k\to\infty$, we find that $u^TWu\geq 0$ since $W_k\to W$. This 
contradicts the fact that $W\in R$. 
\end{itemize}
\end{proof}

\noindent
We are now in a position to prove the main result of this paper which gives a new, simplified description of the closed convex hull of $\Omega(A,B)$.

\begin{theorem}\label{thm:clco D}
	Let $\cD(A, B)$ be as given by \cref{eq:D}, then $\clco \cD(A,B)=\Omega(A,B)$, where
	\begin{equation}\label{eq:omega}
	\Omega(A,B) :=\set{(Y,W)\in\bE}
	{AY=B \! \!\AND\!\! \half YY^T+W\in \cK_A^\circ}.	
	\end{equation}
\end{theorem}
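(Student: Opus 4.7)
The plan is to prove the equality by establishing both inclusions, with $\Omega(A,B)\subset \clco \cD(A,B)$ being the substantial one; in fact, the argument below will yield the stronger statement $\Omega(A,B)=\co \cD(A,B)$, so the closure is not needed.

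For $\clco \cD(A,B)\subset \Omega(A,B)$, the containment $\cD(A,B)\subset \Omega(A,B)$ is immediate since each $(Y,-\half YY^T)$ satisfies $AY=B$ and $\half YY^T+(-\half YY^T)=0\in \cK_A^\circ$. It then suffices to show that $\Omega(A,B)$ is closed and convex. Closedness is obvious, as the defining condition presents $\Omega(A,B)$ as the intersection of the affine set $\{Y:AY=B\}$ with the continuous preimage of the closed cone $\cK_A^\circ$ under $(Y,W)\mapsto \half YY^T+W$. For convexity, set $Y_\lambda:=(1-\lambda)Y_1+\lambda Y_2$ and $W_\lambda:=(1-\lambda)W_1+\lambda W_2$ and use the quadratic identity
\[
\half Y_\lambda Y_\lambda^T+W_\lambda=(1-\lambda)\bigl(\half Y_1Y_1^T+W_1\bigr)+\lambda\bigl(\half Y_2Y_2^T+W_2\bigr)-\half \lambda(1-\lambda)(Y_1-Y_2)(Y_1-Y_2)^T.
\]
The first two terms on the right sit in the convex cone $\cK_A^\circ$ by assumption; for the remainder, $A(Y_1-Y_2)=B-B=0$ forces the columns of $Y_1-Y_2$ into $\ker A$, so $-(Y_1-Y_2)(Y_1-Y_2)^T$ is negative semidefinite with range in $\ker A$, whence it lies in $\cK_A^\circ$ by Proposition \ref{prop:Snp and polar}(a). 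Since $\cK_A^\circ$ is a convex cone, the entire sum lies in $\cK_A^\circ$.

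The main obstacle is the converse inclusion $\Omega(A,B)\subset \co \cD(A,B)$. Given $(Y_0,W_0)\in \Omega(A,B)$, Proposition \ref{prop:Snp and polar}(a) supplies a finite conic decomposition
\[
\half Y_0 Y_0^T+W_0=-\sum_{j=1}^{r}\mu_j\, v_j v_j^T,\qquad \mu_j\ge 0,\; v_j\in \ker A.
\]
The plan is to realize $(Y_0,W_0)$ as an explicit convex combination of points in $\cD(A,B)$ via a symmetric ``$\pm$-perturbation'' along the directions $v_j$. Concretely, fix $e_1\in \R^m$ and, for each $j$, set
\[
Y_j^{\pm}:=Y_0\pm c_j\, v_j e_1^T,
\]
which still satisfies $AY_j^{\pm}=B$ because $v_j\in \ker A$. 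Assign weight $\alpha_j>0$ to each of $Y_j^{+}$ and $Y_j^{-}$, and weight $\lambda_0:=1-2\sum_j \alpha_j$ to $Y_0$ itself. A direct expansion of $Y_j^{\pm}(Y_j^{\pm})^T$ shows that the $\pm$-pairing cancels the cross terms $Y_0 e_1 v_j^T+v_j e_1^T Y_0^T$, so the convex combination
\[
\lambda_0 \bigl(Y_0,-\half Y_0 Y_0^T\bigr)+\sum_j \alpha_j\Bigl(\bigl(Y_j^{+},-\half Y_j^{+}(Y_j^{+})^T\bigr)+\bigl(Y_j^{-},-\half Y_j^{-}(Y_j^{-})^T\bigr)\Bigr)
\]
collapses to $(Y_0,W_0)$ precisely when $\alpha_j c_j^2=\mu_j$ for every $j$. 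These constraints are trivially met by first choosing the $\alpha_j>0$ small enough that $\lambda_0\ge 0$, then setting $c_j:=\sqrt{\mu_j/\alpha_j}$. This exhibits $(Y_0,W_0)\in \co \cD(A,B)$, completing the proof and showing that $\co \cD(A,B)$ is already closed.
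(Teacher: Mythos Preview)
Your proof is correct and in fact yields the stronger statement $\Omega(A,B)=\co\cD(A,B)$, so the convex hull is already closed. The first half (closedness, convexity via the quadratic identity, the easy inclusion) coincides with the paper's argument. For the reverse inclusion, both proofs start from the same conic decomposition $\half Y_0Y_0^T+W_0=-\sum_j\mu_j v_jv_j^T$ with $v_j\in\ker A$, but diverge thereafter. The paper constructs an $\eps$-dependent convex combination whose first node is $Y_1=Y_0/\sqrt{1-\eps}$ and whose remaining nodes are \emph{unsymmetrized} perturbations $Z_i+[\sqrt{2\mu_i N/\eps}\,v_i,0,\ldots,0]$; the resulting point only approximates $(Y_0,W_0)$, so a limit $\eps\downarrow 0$ is taken and only membership in $\clco\cD(A,B)$ is obtained. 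Your symmetric pair $Y_j^\pm=Y_0\pm c_j v_je_1^T$ annihilates the cross terms $Y_0e_1v_j^T+v_je_1^TY_0^T$ exactly, which is precisely what removes the need for any limiting. The gains are a shorter and more transparent construction, the sharper conclusion that $\co\cD(A,B)$ is closed, and the sidestepping of a delicate point in the paper's argument (the node $Y_1=Y_0/\sqrt{1-\eps}$ satisfies $AY_1=B$ only when $B=0$).
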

\begin{proof}
We first show that $\Omega(A,B)$ is itself a closed convex set. Obviously,
$\Omega(A,B)$ is closed since $\cK_A^\circ$ is closed and the mappings $Y\mapsto AY$
and $(Y,W)\mapsto \half YY^T+W$ are continuous. 

So we need only show that $\Omega(A,B)$ is convex: To this end, let $(Y_i, W_i)\in \Omega(A,B),\ i=1,2$ and $0\le \lam\le 1$. Then there exist $M_i \in \cK_A^\circ, \ i=1,2$ such that $W_i = -\half Y_iY_i^T + M_i$. Observe that $A((1-\lam)Y_1+\lam Y_2)=B$. Moreover, we compute that
\[
\begin{aligned}
&\half ((1-\lam)Y_1+\lam Y_2)((1-\lam)Y_1+\lam Y_2)^T+((1-\lam)W_1+\lam W_2)\\
=&\half((1\!-\!\lam)Y_1\!+\!\lam Y_2)((1\!-\!\lam)Y_1\!+\!\lam Y_2)^T\!+\!\!\left(\!(1\!-\!\lam)(-\half Y_1Y_1^T\!+\!M_1)\!+\!\lam(-\half Y_2Y_2^T\!+\!M_2)\!\right)\\
=&\half\lam(1-\lam)(-Y_1Y_1^T+Y_1Y_2^T+Y_2Y_1^T-Y_2Y_2^T)+(1-\lam)M_1+\lam M_2\\
=&\lam(1-\lam)\left(-\half(Y_1 - Y_2)(Y_1 - Y_2)^T\right)+(1-\lam)M_1+\lam M_2.
\end{aligned}
\]
Since $\rge (Y_1 - Y_2) \subset\ker A$, this shows $\lam(1-\lam)\left(-\half(Y_1 - Y_2)(Y_1 - Y_2)^T\right)+(1-\lam)M_1+\lam M_2 \in \cK_A^\circ$. Consequently, $\Omega(A,B)$ is a closed convex set.

Next note that if $(Y,-\half YY^T)\in\cD(A,B)$, then
$(Y,-\half YY^T)\in\Omega(A,B)$ since $0\in \cK_A^\circ$. Hence, $\clco\cD(A,B)\subset\Omega(A,B)$.

It therefore remains to establish the reverse inclusion: For these purposes, let $(Y,W)\in \Omega(A,B)$. By Carath\'{e}odory's theorem, there exist $\mu_i \ge 0, v_i \in \ker A\; (i=1, \ldots, N)$  such that
\[
W = -\half YY^T - \sum_{i=1}^N\mu_iv_iv_i^T,
\]
where $N = \frac{n(n+1)}{2}+1$. Let $0<\eps<1$. Set $\lam_1 := 1-\eps$ and $\lam_2=\ldots=\lam_{N+1}=\lam:= \eps/N$. Denote $Y_1 := Y/\sqrt{1 - \eps}$. Take $Z_i \in \Rnm,\ i=1,\ldots,N$ such that $AZ_i = B$. Finally, set 
\[
V_i = \left[\sqrt{\frac{2\mu_i}{\lam}}v_i, 0, \ldots, 0\right]\in \R^{n\times m}\AND Y_{i+1} = Z_i + V_i,\; (i=1,\ldots, N).
\]
Observe that
\[
\sum_{i=1}^{N+1}\lam_iY_i = \sqrt{1-\eps} Y + \frac{\eps}{N}\sum_{i=2}^{N+1}Y_i = \sqrt{1-\eps} Y + \frac{\eps}{N}\sum_{i=1}^{N}Z_i + \sqrt{\frac{\eps}{N}}\sum_{i=1}^{N}\bar{V}_i,
\]
where $\bar{V}_i = [\sqrt{2\mu_i}v_i, 0, \ldots, 0],\ i=1,\ldots, N$, and 
\[
\begin{aligned}
-\half\sum_{i=1}^{N+1}\lam_iY_iY_i^T &= -\half YY^T - \half\sum_{i=1}^N\frac{\eps}{N}\left(Z_iZ_i^T + Z_iV_i^T + V_iZ_i^T\right) -\sum_{i=1}^N\mu_iv_iv_i^T\\
& = W - \sum_{i=1}^N\half\left(\frac{\eps}{N} Z_iZ_i^T + \sqrt{\frac{\eps}{N}}Z_i\bar{V}_i^T + \sqrt{\frac{\eps}{N}}\bar{V}_iZ_i^T\right),
\end{aligned}.
\]
Therefore  
{\small
\begin{gather}
\left(\sqrt{1-\eps} Y + \frac{\eps}{N}\sum_{i=1}^{N}Z_i + \sqrt{\frac{\eps}{N}}\sum_{i=1}^{N}\bar{V}_i,\quad  W - \sum_{i=1}^N\half\left(\frac{\eps}{N} Z_iZ_i^T + \sqrt{\frac{\eps}{N}}Z_i\bar{V}_i^T + \sqrt{\frac{\eps}{N}}\bar{V}_iZ_i^T\right)\right) \nonumber
\\ =\left(\sum_{i=1}^{N+1}\lam_iY_i,\quad  -\half\sum_{i=1}^{N+1}\lam_iY_iY_i^T\right).
\label{eq:eps limit}
\end{gather}
} 
Set $\kappa:=\dim \bE$. By Carath\'eodory's theorem,  
\[
\co \cD(A,B)\!=\!
\set{\!\!\left(\sum_{i=1}^{\kappa+1}\lam_iY_i,-\half\sum_{i=1}^{\kappa+1}\lam_iY_iY_i^T\right)\!\!}
{\!\begin{matrix}
	\lambda\in \R^{{\kappa+1}}_+, \sum_{i=1}^{\kappa +1}\lambda_i=1,\,  Y_i\in\R^{n\times m}\\
	\, \ AY_i=B\ (i=1,\dots,{\kappa+1)}
	\end{matrix}\!}.
\]
By letting $\eps\downarrow 0$ in \eqref{eq:eps limit}, we find $(Y,W)\in\clco\cD(A,B)$ thereby
concluding the proof.
\end{proof}

\section{Normal cone of {\boldmath${\Omega(A, B)}$ 
and the  subdifferential of $\sigma_{\cD(A,B)}$}}
\label{sec:ncone}
The  new representation for $\clco \cD(A,B)$ allows us to dramatically
simplify the representation for the subdifferential of $\sig_{\cD(A,B)}$ given in
\cite[Theorem 4.8]{Bur15}.  
For this we use the well-established relation
\begin{equation}\label{eq:support sd 2}
\sd \sigma_C(x)=\set{z\in \clco C}{x\in\ncone{z}{\clco C}},
\end{equation}
where $C\subset \bE$ is nonempty and convex.


\begin{proposition}[The normal cone to $\Omega(A,B)$]\label{prop:ncone Omega}
	Let $\Omega(A,B)$ be as given by \cref{eq:omega} and let
	$(Y, W)\in\Omega(A,B)$. Then 
	\[\ncone{Y, W}{\Omega(A,B)}=
	\set{(X,V)\in\bE}
	{\begin{aligned}
	V\in\cK_A,\ \ip{V}{\half YY^T+W}=0\\ \AND \rge (X-VY)\subset(\ker A)^\perp
	\end{aligned}}
	\]
\end{proposition}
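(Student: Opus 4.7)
My plan is to verify the characterization directly from the defining inequality
\[
\ip{(X,V)}{(Y',W')-(Y,W)}\le 0\quad\text{for all } (Y',W')\in\Omega(A,B),
\]
exploiting the convenient parametrization of $\Omega(A,B)$ made available by Theorem \ref{thm:clco D}. Every element $(Y',W')\in\Omega(A,B)$ can be written as $Y'=Y+U$ with columns of $U$ in $\ker A$ and $W'=M'-\tfrac{1}{2}(Y+U)(Y+U)^T$ with $M'\in\cK_A^\circ$. Setting $M_0:=\tfrac{1}{2}YY^T+W\in \cK_A^\circ$, a short calculation using trace cyclicity and the symmetry of $V$ reveals that the inner product above equals
\[
\ip{X-VY}{U}+\ip{V}{M'-M_0}-\tfrac{1}{2}\ip{V}{UU^T}.
\]

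For the forward inclusion, I would analyze the requirement that this quantity be $\le 0$ for all admissible $(U,M')$ one degree of freedom at a time. First, fixing $U=0$ and scaling $M'$ positively through the cone $\cK_A^\circ$ forces $\ip{V}{M'}\le 0$ for all $M'\in\cK_A^\circ$, so by the bipolar theorem $V\in(\cK_A^\circ)^\circ=\cK_A$; taking $M'=0$ then yields $\ip{V}{M_0}\ge 0$, which combined with $\ip{V}{M_0}\le 0$ (from $V\in\cK_A$ and $M_0\in\cK_A^\circ$) delivers the complementarity $\ip{V}{\tfrac{1}{2}YY^T+W}=0$. Next, fixing $M'=M_0$ and replacing $U$ by $tU$ turns the remaining inequality into a polynomial in $t$ whose quadratic coefficient $-\tfrac{1}{2}\ip{V}{UU^T}=-\tfrac{1}{2}\sum_j u_j^TVu_j\le 0$ (using $V\in\cK_A$ and columns of $U$ in $\ker A$); the only way such a polynomial can be bounded above by $0$ for every real $t$ is to have the linear coefficient $\ip{X-VY}{U}$ vanish for each admissible $U$. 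Selecting $U$ with a single nonzero column equal to an arbitrary $u\in\ker A$ then yields $\rge(X-VY)\subset(\ker A)^\perp$.

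For the reverse inclusion, I would substitute the three assumed conditions into the expanded expression: $\ip{X-VY}{U}=0$ by the range condition (since each column of $U$ lies in $\ker A$), $\ip{V}{M_0}=0$ by complementarity, $\ip{V}{M'}\le 0$ by $V\in\cK_A$ and $M'\in\cK_A^\circ$, and $\ip{V}{UU^T}\ge 0$ by the same positivity. These together render the sum non-positive for every admissible $(U,M')$, completing the characterization. I expect the most delicate step to be the scaling argument in $U$ that isolates the range condition; it crucially requires $V\in\cK_A$ to have already been secured so that the quadratic term has the right sign, which is why the order of the three forward implications matters.
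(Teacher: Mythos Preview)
Your proof is correct and complete: the parametrization $(Y',W')=(Y+U,\,M'-\tfrac12(Y+U)(Y+U)^T)$ with $AU=0$ and $M'\in\cK_A^\circ$ covers all of $\Omega(A,B)$, your trace computation is accurate, and the two scaling arguments (first in $M'$ through the cone $\cK_A^\circ$, then in $t$ once the quadratic coefficient is known to have the favorable sign) cleanly isolate each of the three conditions. The reverse inclusion is immediate as you indicate.

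Your route is, however, genuinely different from the paper's. Rather than working directly from the normal-cone inequality, the paper writes $\Omega(A,B)=C_1\cap C_2$ with $C_1=\{Y:AY=B\}\times\Sn$ affine and $C_2=\{(Y,W):\tfrac12 YY^T+W\in\cK_A^\circ\}$, and then invokes Rockafellar's sum rule (Corollary~23.8.1) to get $N_{\Omega(A,B)}=N_{C_1}+N_{C_2}$. The normal cone to $C_1$ is read off from the affine structure, while $N_{C_2}$ is obtained via a chain rule (Rockafellar--Wets, Exercise~10.26(d)) applied to $F(Y,W)=\tfrac12 YY^T+W$, using injectivity of $\nabla F(Y,W)^*$. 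The paper's argument is shorter and more structural but leans on two external calculus rules; yours is entirely self-contained and elementary, at the cost of a slightly longer hands-on verification. Your careful ordering---establishing $V\in\cK_A$ \emph{before} the $t$-scaling step so that the quadratic term $-\tfrac12\ip{V}{UU^T}$ has the right sign---is precisely the substantive insight that replaces the chain rule in the paper's approach.
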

\begin{proof}
	Observe that $\Omega(A,B)=C_1\cap C_2\subset \bE$ where 
	\[
	C_1:=\set{Y\in \R^{n\times m}}{AY=B}\times \Sn\AND
	C_2:=\set{(Y,W)}{F(Y,W)\in \cK_A^\circ},
	\]
	with $F(Y,W):=\half YY^T+W$. Clearly, $C_1$ is affine, hence convex, and $C_2$ is also convex, which can be seen by an analogous 
	reasoning as for the convexity of $\Omega(A,B)$ (cf. the proof of Theorem \ref{thm:clco D}). Therefore,  
	\cite[Corollary 23.8.1]{RTR70} tells us that 
	\begin{equation}\label{eq:ncone sum}
	\ncone{Y,W}{\Omega(A,B)}=\ncone{Y,W}{C_1}+\ncone{Y,W}{C_2},
	\end{equation}
	where
	\[
	\ncone{Y,W}{C_1}=\set{R\in \R^{n\times m}}{\rge R\subset (\ker A)^\perp}
	\times \{0\}.
	\] 
		We now compute $\ncone{(Y,W)}{C_2}$. First recall that for any nonempty closed
	convex cone $C\subset\cE$, we have $\ncone{x}{C}=\set{z\in C^\circ}{ \ip{z}{x}=0}$ for all $x\in C$.
	Next, note that
	\[
	\nabla F(Y,W)^*U=(UY,\ U)\quad(U\in \Sn),
	\]
	so that $\nabla F(Y,W)^*U=0$ if and only if $U=0$. Hence, by 
	\cite[Exercise 10.26 Part (d)]{RoW98},
	\[
	\begin{aligned}
	\ncone{Y,W}{C_2}
	&=\set{(VY,V)}{V\in \cK_A,\ \ip{V}{\half YY^T+W}=0}.
	\end{aligned}
	\]
	Therefore, by \cref{eq:ncone sum}, $\ncone{Y,W}{\Omega(A,B)}$ is given by
	\[
	\set{(X,V)}{\rge(X-VY)\subset(\ker A)^\perp,\ V\in\cK_A,\ \ip{V}{\half YY^T+W}=0},
	\]
	which proves the result.
\end{proof}

\noindent
By combining \eqref{eq:support sd 2} and Proposition \ref{prop:ncone Omega}  
we obtain a simplified representation of the subdifferential
of the support function $\sigma_\cD(A,B)$.

\begin{corollary}[The subdifferential of $\sig_{\cD(A,B)}$]\label{cor:Subdiff}
	Let $\cD(A,B)$ be as given in \cref{eq:D}. Then, for all $(X,V)\in\dom \sig_{\cD(A,B)}$
	(see \cref{eq:dom support D}) we have 
	\[
	\sd \support{X,V}{\cD(A,B)}=
	\set{(Y, W)\in\Omega(A, B)}{\begin{aligned}
		&\exists Z\in\bR^{p\times m}:\;X=VY+A^TZ,\\
		&\ip{V}{\half YY^T+W}=0\end{aligned}}.
	\]
\end{corollary}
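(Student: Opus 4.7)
The plan is to chain together three ingredients that are already in place: the invariance of the support function under closed convex hull, the general formula relating the subdifferential of a support function to the normal cone, and the explicit description of the normal cone to $\Omega(A,B)$ given in Proposition~\ref{prop:ncone Omega}. First, by \eqref{eq:support sd 1} and Theorem~\ref{thm:clco D} we have $\sigma_{\cD(A,B)}=\sigma_{\Omega(A,B)}$, since $\Omega(A,B)=\clco\cD(A,B)$. Applying \eqref{eq:support sd 2} to $C:=\cD(A,B)$ therefore yields
\[
\sd\sigma_{\cD(A,B)}(X,V)=\set{(Y,W)\in\Omega(A,B)}{(X,V)\in\ncone{Y,W}{\Omega(A,B)}}.
\]

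Next, I would substitute the formula from Proposition~\ref{prop:ncone Omega} for the normal cone. This immediately replaces the condition $(X,V)\in\ncone{Y,W}{\Omega(A,B)}$ by the three conditions
\[
V\in\cK_A,\qquad \ip{V}{\tfrac12 YY^T+W}=0,\qquad \rge(X-VY)\subset(\ker A)^\perp.
\]
(Note that the first condition is automatic from $(X,V)\in\dom\sigma_{\cD(A,B)}$ via \eqref{eq:dom support D}, but it is already built into $\ncone{Y,W}{\Omega(A,B)}$ and need not be singled out.)

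The only remaining task is to rephrase the range inclusion $\rge(X-VY)\subset(\ker A)^\perp$ in the form $X=VY+A^T Z$ for some $Z\in\R^{p\times m}$. Since $(\ker A)^\perp=\rge A^T$, the inclusion asserts that every column of $X-VY\in\R^{n\times m}$ lies in $\rge A^T$. Writing the $j$-th column of $X-VY$ as $A^Tz_j$ for some $z_j\in\R^p$ and assembling $Z:=[z_1,\dots,z_m]\in\R^{p\times m}$ produces $X-VY=A^TZ$; conversely, any such $Z$ clearly delivers the range inclusion. This last step is purely linear algebra and presents no real obstacle.

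In short, the proof is essentially a substitution: the real work was done in Theorem~\ref{thm:clco D} and Proposition~\ref{prop:ncone Omega}, and the main thing to verify cleanly is the matrix-theoretic translation between the column-range condition and the existence of the multiplier matrix $Z$. No separate argument is required for points on the boundary of $\dom\sigma_{\cD(A,B)}$ since \eqref{eq:support sd 2} already applies pointwise on the effective domain.
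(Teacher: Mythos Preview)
Your proof is correct and follows exactly the approach of the paper, which simply cites Proposition~\ref{prop:ncone Omega} together with \eqref{eq:support sd 2}. You have merely spelled out the implicit linear-algebra translation $(\ker A)^\perp=\rge A^T$ that turns the range inclusion into the multiplier form $X=VY+A^TZ$, which the paper leaves to the reader.
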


\begin{proof}
	This follows directly from the normal cone description in Proposition  \ref{prop:ncone Omega} and the relation \eqref{eq:support sd 2}.
\end{proof}



\section{The geometry of {\boldmath${\Omega(A, B)}$}}
\label{sec:geometry}

We first compute the relative interior and the affine hull of $\Omega(A,B)$. For these purposes, we recall an established result on the 
relative interior of a convex set in a product space.

\begin{proposition}[{\cite[Theorem 6.8]{RTR70}}]\label{prop:RintProd} Let $C\subset \bE_1\times \bE_2$. For each $y\in \bE_1$ we define $C_y:=\set{z\in \bE_2}{(y,z)\in C}$ and $D:=\set{y}{C_y\neq \emptyset}$. Then 
\[
\rint C=\set{(y,z)}{y\in \rint D,\; z\in \rint C_y }.
\]
\end{proposition}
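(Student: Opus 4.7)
The plan is to verify both set inclusions using two classical facts from convex analysis, both found in \cite{RTR70}: (i) the relative interior commutes with linear maps on convex sets, so that for the coordinate projection $\pi_1(y,z)=y$ one has $\pi_1(\rint C)=\rint \pi_1(C)=\rint D$; and (ii) the intersection rule for relative interiors, namely that if $C_1,C_2$ are convex with $\rint C_1\cap \rint C_2\neq \emptyset$, then $\rint(C_1\cap C_2)=\rint C_1\cap \rint C_2$. The second fact will be applied with $C_2=\{y_0\}\times \bE_2$, which is affine (hence equal to its own relative interior), so the non-emptiness hypothesis reduces to $\rint C\cap (\{y_0\}\times \bE_2)\neq \emptyset$, which is automatic whenever $y_0\in \pi_1(\rint C)=\rint D$.

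For the forward inclusion, I take $(y,z)\in \rint C$. From (i) we immediately obtain $y=\pi_1(y,z)\in \rint D$. Now consider the affine set $\{y\}\times \bE_2$: since $(y,z)\in \rint C\cap (\{y\}\times \bE_2)$, fact (ii) yields
\[
\{y\}\times \rint C_y=\rint\bigl(C\cap (\{y\}\times \bE_2)\bigr)=\rint C\cap (\{y\}\times \bE_2),
\]
where the first equality uses that $C\cap (\{y\}\times \bE_2)=\{y\}\times C_y$ and that relative interior is preserved by the affine isomorphism $z\mapsto (y,z)$. In particular $z\in \rint C_y$, giving the forward inclusion.

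For the reverse inclusion, fix $y\in \rint D$ and $z\in \rint C_y$. By (i) there exists $\tilde z\in \bE_2$ with $(y,\tilde z)\in \rint C$, so the intersection $\rint C\cap (\{y\}\times \bE_2)$ is non-empty. Applying (ii) exactly as in the previous step gives
\[
\rint C\cap (\{y\}\times \bE_2)=\{y\}\times \rint C_y,
\]
and since $z\in \rint C_y$ by assumption, $(y,z)\in \rint C$.

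The argument is short, and the only delicate point is confirming the non-emptiness hypothesis in (ii) before invoking it, which is precisely why fact (i) must be established (or cited) first. There is no real obstacle beyond correctly bookkeeping this: once $\pi_1(\rint C)=\rint D$ is in hand, the slice-by-slice description of $\rint C$ follows automatically from the standard relative-interior calculus.
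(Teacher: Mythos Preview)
Your proof is correct and follows essentially the same route that the paper (implicitly) uses: the paper does not prove this proposition directly but cites it from \cite[Theorem~6.8]{RTR70}, and when it proves the analogous affine-hull statement in Proposition~\ref{prop:AffProd} it explicitly ``imitates the proof of \cite[Theorem~6.8]{RTR70}'' via exactly your two ingredients---commutation of the projection $\pi_1$ with $\rint$ to get $\pi_1(\rint C)=\rint D$, and the intersection rule applied to $C$ and the affine slice $\{y\}\times\bE_2$. Your bookkeeping of the non-emptiness hypothesis before invoking the intersection rule is precisely the point the paper also highlights.
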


\noindent
We use this result to get a representation for the relative interior of $\Omega(A,B)$ directly, and then mimic its technique of proof to 
tackle the affine hull. 


\begin{lemma}\label{lem:AffIntersect} Let $A,B\subset \bE$ be convex with $\rint A\cap \rint B\neq \emptyset$.  Then $\aff (A\cap B)=\aff A\cap \aff B$. 
\end{lemma}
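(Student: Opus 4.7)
The plan is the standard two-inclusion argument, with the relative interior hypothesis doing its work in exactly one step.

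The inclusion $\aff(A\cap B)\subset \aff A\cap \aff B$ is immediate: $A\cap B\subset A$ gives $\aff(A\cap B)\subset \aff A$ by monotonicity of the affine hull, and symmetrically for $B$. Note that this direction does not require the relative interior assumption.

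For the reverse inclusion, I would fix a point $\bar x\in \rint A\cap \rint B$, which in particular lies in $A\cap B$, so all three affine hulls contain $\bar x$ and can be written in the form
\[
\aff A=\bar x+L_A,\qquad \aff B=\bar x+L_B,\qquad \aff(A\cap B)=\bar x+L,
\]
where $L_A,L_B,L$ are the subspaces parallel to the respective affine hulls. Since $\bar x$ lies in both $\aff A$ and $\aff B$, one has $\aff A\cap \aff B=\bar x+(L_A\cap L_B)$, so the task reduces to proving $L_A\cap L_B\subset L$.

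The key step then uses the topological description of the relative interior: from $\bar x\in\rint A$ one extracts $\varepsilon_A>0$ with $B_{\varepsilon_A}(\bar x)\cap \aff A\subset A$, and similarly $\varepsilon_B>0$ for $B$. Given any $v\in L_A\cap L_B$, choosing $t>0$ small enough that $\bar x+tv$ lies in both balls puts $\bar x+tv$ in both $A$ and $B$, hence in $A\cap B$. Therefore $tv\in L$, and since $L$ is a subspace, $v\in L$. This proves $L_A\cap L_B\subset L$, completing the reverse inclusion.

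I do not anticipate a real obstacle; the proof is essentially a direct unpacking of the definitions, and the only delicate point is to remember to choose $\bar x$ in the intersection of the \emph{relative interiors} (not merely in $A\cap B$), so that one may move in any direction from $L_A$ and any direction from $L_B$ and stay inside the respective sets.
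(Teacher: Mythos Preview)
Your proof is correct, but it takes a different route from the paper's. After noting the easy inclusion, the paper translates so that $0\in\rint A\cap\rint B$ and then uses the conical characterization \eqref{eq:RintChar}, which gives $\aff A=\R_+A$ and $\aff B=\R_+B$. For $x\in\aff A\cap\aff B$ with $x\neq 0$ one writes $x=\lambda a=\mu b$ with $\lambda,\mu>0$, $a\in A$, $b\in B$; assuming $\lambda>\mu$, convexity of $B$ (with $0\in B$) yields $a=(1-\mu/\lambda)\cdot 0+(\mu/\lambda)b\in B$, hence $a\in A\cap B$ and $x=\lambda a\in\R_+(A\cap B)\subset\aff(A\cap B)$. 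Your argument instead keeps the base point $\bar x$ general, passes to the parallel subspaces $L_A,L_B,L$, and uses the $\varepsilon$-ball definition of relative interior to push a short segment from $\bar x$ into $A\cap B$. Your approach is a touch more elementary---it needs neither the translation to the origin nor the identity $\aff C=\R_+C$ for $0\in\rint C$---while the paper's scaling argument has the mild advantage of exhibiting explicitly an element of $A\cap B$ on the ray through $x$, which some readers may find more constructive. Either argument is entirely adequate here.
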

\begin{proof} The inclusion $\aff (A\cap B)\subset \aff A\cap \aff B$ is clear since the latter set is affine and contains $A\cap B$. 

For proving the reverse inclusion,    we can assume w.l.o.g. that $0\in \rint A\cap \rint B=\rint(A\cap B)$, where for the latter equality we refer to \cite[Theorem 6.5]{RTR70}. In particular we have
\begin{equation}\label{eq:AffAB}
\aff A=\R_+A,\; \aff B=\R_+B\AND \aff(A\cap B)=\R_+(A\cap B),
\end{equation}
see \eqref{eq:RintChar} and the discussion afterwards. Now, let $x\in \aff A\cap \aff B$. If $x=0$ there is nothing to prove. If $x\neq 0$, by \eqref{eq:AffAB}, we have 
$
x=\lambda a=\mu b
$
for some  $\lambda, \mu >0$ and $a\in A,b\in B$. W.l.o.g we have $\lambda>\mu$, and hence, by convexity of $B$, we have 
\[
a=\left(1-\frac{\mu}{\lambda}\right)0+\frac{\mu}{\lambda}b\in B.
\]
Therefore $x=\lambda a\in \R_+(A\cap B)=\aff (A\cap B)$, see \eqref{eq:AffAB}.
\end{proof}

\noindent
We now prove  a result analogous to Proposition \ref{prop:RintProd}. 

\begin{proposition}\label{prop:AffProd} In addition to the  assumptions of  Proposition \ref{prop:RintProd} assume that $D$ is affine. Then 
$(y,z)\in\aff C$ if and only if $y\in D$ and $z\in \aff C_y$.
\end{proposition}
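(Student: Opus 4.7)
The plan is to recast the biconditional as the slice identity
\[
(\aff C) \cap (\{y\} \times \bE_2) = \{y\} \times \aff C_y \qquad \text{for every } y \in D,
\]
combined with the observation that the first coordinate of any point of $\aff C$ automatically lies in $D$.

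First I would dispatch the easy direction: if $y \in D$ and $z \in \aff C_y$, one writes $z = \sum_i \lambda_i z_i$ with $z_i \in C_y$ and $\sum_i \lambda_i = 1$, whence $(y, z) = \sum_i \lambda_i (y, z_i)$ is an affine combination of points of $C$. The implication $(y,z) \in \aff C \Rightarrow y \in D$ will follow from the fact that the coordinate projection $\pi_1 : \bE_1 \times \bE_2 \to \bE_1$ is linear and therefore commutes with the affine hull, giving $\pi_1(\aff C) = \aff \pi_1(C) = \aff D = D$.

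The core of the argument is the remaining direction: given $y \in D$ and $(y, z) \in \aff C$, show $z \in \aff C_y$. Here I would fix $y \in D$ and apply Lemma \ref{lem:AffIntersect} to the convex sets $A := \{y\} \times \bE_2$ (affine, hence $\rint A = A = \aff A$) and $C$. To verify the hypothesis $\rint A \cap \rint C \neq \emptyset$, I would invoke Proposition \ref{prop:RintProd} together with $\rint D = D$ (from $D$ being affine) to write $\rint C = \{(y', w) : y' \in D,\; w \in \rint C_{y'}\}$; hence $\rint A \cap \rint C = \{y\} \times \rint C_y$, which is nonempty since $C_y$ is a nonempty convex set. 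The lemma then yields $\aff(A \cap C) = \aff A \cap \aff C = A \cap \aff C$. Since $A \cap C = \{y\} \times C_y$ and thus $\aff(A \cap C) = \{y\} \times \aff C_y$, this identity is precisely $\{y\} \times \aff C_y = A \cap \aff C$, completing the hard direction.

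The main obstacle is spotting the right reduction to Lemma \ref{lem:AffIntersect}---in particular, the choice of the affine slice $A = \{y\} \times \bE_2$. Once that is in place, the verification of the relative-interior intersection hypothesis is essentially immediate from Proposition \ref{prop:RintProd} and the affinity of $D$, and everything else is routine bookkeeping.
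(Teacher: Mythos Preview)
The proposal is correct and essentially identical to the paper's proof: both fix $y\in D$, apply Lemma~\ref{lem:AffIntersect} to the slice $M_y=\{y\}\times\bE_2$ and $C$ after checking the relative-interior intersection hypothesis via the affinity of $D$, and handle the first-coordinate condition using that linear maps commute with the affine hull. The only cosmetic difference is that you give a separate direct argument for the easy direction, whereas the paper reads both directions off the resulting slice identity $\aff M_y\cap\aff C=\{y\}\times\aff C_y$.
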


\begin{proof} We imitate the proof of \cite[Theorem 6.8]{RTR70}: Let $L:(y,z)\mapsto z$. Since $D$ is assumed to be affine (hence $D=\aff D=\rint D$), we have
\begin{equation}\label{eq:LAff}
D=L(C)=L(\rint C)=L(\aff C),
\end{equation} 
where we invoke the fact that linear mappings commute with the relative interior and the affine hull, see \cite[Theorem 6.7 and p.~8]{RTR70}.

Now fix $y\in D=\rint D$ and define the affine set  $M_y:=\set{(y,z)}{z\in \bE_2}=\{y\}\times \bE_2$.  Then, by \eqref{eq:LAff},
there exists $z\in\bE_2$ such that $y=L(y,z)$ and $(y,z)\in\rint C$. Hence, $\rint M_y\cap \rint C\neq \emptyset$ and we can invoke Lemma \ref{lem:AffIntersect} to obtain
\begin{equation*}\label{eq:MainCap}
 \aff M_y\cap \aff C=\aff( M_y \cap C)=\aff (\{y\}\times C_y)=\{y\}\times \aff C_y.
 \end{equation*}
Hence, if $y\in D, z\in \aff C_y$, we have $(y,z)\in \{y\}\times \aff C_y= M_y\cap \aff C\subset \aff C$.

In turn, for $(y,z)\in C$, we have $(y,z)\in M_y\cap \aff C=\{y\}\times C_y$, hence $z\in C_y\neq\emptyset$, so $y\in D$.
\end{proof}

\noindent
We are now in a position to prove the desired result on the relative interior and the affine hull of $\Omega(A,B)$. 

\begin{proposition}\label{prop:AffRint}
	For $\Omega(A, B)$ given by \cref{eq:omega} the following hold:
	\begin{itemize}
	\item[a)] $
	\rint\Omega(A, B) = \set{(Y, W) \in \bE}{AY=B \! \!\AND\!\! \half YY^T+W\in \rint(\cK_A^\circ)}. 
	$
	\item[b)]
 $
	\aff\Omega(A, B) = \set{(Y, W) \in \bE}{AY=B \! \!\AND\!\! \half YY^T+W\in \lin\cK_A^\circ},
	$
	
where  $\lin\cK_A^\circ = \lin\set{vv^T}{v \in \ker A}$.
	
\end{itemize}
\end{proposition}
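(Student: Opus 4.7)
The plan is to realise $\Omega(A,B)$ as a subset of the product space $\bE_1\times\bE_2$ with $\bE_1:=\Rnm$ and $\bE_2:=\Sn$, so that Propositions \ref{prop:RintProd} and \ref{prop:AffProd} apply directly with $C:=\Omega(A,B)$. The whole proof is then essentially a matter of identifying the right fibres.

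First I would analyse the fibre structure. For fixed $Y\in\Rnm$, the slice
\[
C_Y = \set{W\in\Sn}{(Y,W)\in\Omega(A,B)}
\]
is empty if $AY\neq B$, and, when $AY=B$, is just the translate $C_Y=\cK_A^\circ-\half YY^T$, directly from the defining condition $\half YY^T+W\in\cK_A^\circ$. Consequently the projection set from Proposition \ref{prop:RintProd},
\[
D=\set{Y\in\Rnm}{C_Y\neq\emptyset}=\set{Y\in\Rnm}{AY=B},
\]
is an \emph{affine} subspace (nonempty by the standing assumption $\rge B\subset \rge A$), and in particular $\rint D=D=\aff D$.

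For part a), translations commute with the relative interior, so for every $Y\in D$ one has $\rint C_Y=\rint\cK_A^\circ-\half YY^T$. Proposition \ref{prop:RintProd} then gives $(Y,W)\in\rint\Omega(A,B)$ if and only if $AY=B$ and $W+\half YY^T\in\rint\cK_A^\circ$, which is the claimed formula. For part b), since $D$ is affine the hypothesis of Proposition \ref{prop:AffProd} is met. Because $0\in\cK_A^\circ$ (it is a cone), $\aff\cK_A^\circ=\lin\cK_A^\circ$, and hence $\aff C_Y=\lin\cK_A^\circ-\half YY^T$ for every $Y\in D$. Proposition \ref{prop:AffProd} then delivers the desired characterisation of $\aff\Omega(A,B)$, and the alternative description $\lin\cK_A^\circ=\lin\set{vv^T}{v\in\ker A}$ is immediate from Proposition \ref{prop:Snp and polar}(c) applied to $\cS=\ker A$.

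There is no genuine obstacle here: once the fibre $C_Y$ is recognised as a translate of $\cK_A^\circ$, both statements reduce to quoting the two preparatory propositions. The only point needing a quick verification is that $D$ is affine (so that Proposition \ref{prop:AffProd} applies), which is clear from the linearity of the map $Y\mapsto AY$.
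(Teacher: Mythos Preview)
Your proposal is correct and follows essentially the same approach as the paper: identify the fibres $C_Y=\cK_A^\circ-\half YY^T$ over the affine set $D=\{Y:AY=B\}$ and then invoke Propositions~\ref{prop:RintProd} and~\ref{prop:AffProd}, using that translation commutes with $\rint$ and $\aff$. Your explicit remark that $\aff\cK_A^\circ=\lin\cK_A^\circ$ because $0\in\cK_A^\circ$ is a helpful clarification the paper leaves implicit.
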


\begin{proof}We apply the format of Proposition \ref{prop:RintProd} and \ref{prop:AffProd}, respectively, for $C:=\Omega(A,B)$.  Then 
\[
D=\set{Y}{AY=B}\AND
C_y=\left\{\begin{array}{rcl}
\cK_A^\circ-\frac{1}{2}YY^T, & \text{if } &   AY=B,\\ 
\emptyset,  & \text{else}.
\end{array}\right.
 \quad(Y\in \R^{n\times m}),
\]
\begin{itemize} 
\item[a)] Apply Proposition \ref{prop:RintProd} and observe that $\rint(\cK_A^\circ-\frac{1}{2}YY^T)=\rint(\cK_A^\circ)-\frac{1}{2}YY^T$.
\item[b)] Apply Proposition \ref{prop:AffProd} and observe that $D$ is affine, and that  $\aff(\cK_A^\circ-\frac{1}{2}YY^T)=\aff (\cK_A^\circ)-\frac{1}{2}YY^T$.
\end{itemize}
\end{proof}

\noindent
As a direct consequence of Propositions \ref{prop:Snp and polar} and \ref{prop:AffRint}, we obtain the 
following result for the special case $(A,B)=(0,0)$.

\begin{corollary}
	It holds that 
	\[
	\clco\set{(Y,-\half YY^T)}{Y\in \R^{n\times m}} = \set{(Y,W)\in\bE}{W+\half YY^T\preceq 0},	
	\]
	and
	\[
	\inter\left(\clco\set{(Y,-\half YY^T)}{Y\in \R^{n\times m}}\right)= \set{(Y,W)\in\bE}{W+\half YY^T\prec 0}.
	\]
\end{corollary}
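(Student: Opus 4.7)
The plan is to specialize the general results already established---namely Theorem \ref{thm:clco D}, Proposition \ref{prop:Snp and polar}, and Proposition \ref{prop:AffRint}---to the case $A=0$ and $B=0$, which corresponds to the subspace $\cS=\ker A=\R^n$.

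First, I would identify the relevant cone. When $\cS=\R^n$, the orthogonal projection onto $\cS$ is the identity, so by \eqref{eq:KAlt} the cone $\cK_A=\cK_{\R^n}$ coincides with $\Snp$. Applying Proposition \ref{prop:Snp and polar}(a) with $P=I$ yields $\cK_A^\circ=\{W\in\Sn : W\preceq 0\}$, and part (d) of the same proposition (or equivalently the classical identification $\inter\Snp=\Snpp$) gives $\rint\cK_A^\circ=\{W\in\Sn : W\prec 0\}$. Moreover, part (c) yields $\lin\cK_A^\circ=\{W\in\Sn:\rge W\subset\R^n\}=\Sn$.

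The first identity then follows from a direct substitution into Theorem \ref{thm:clco D}: since $A=0$ and $B=0$, the constraint $AY=B$ is vacuous, and $\cD(0,0)=\{(Y,-\tfrac12 YY^T):Y\in\R^{n\times m}\}$, so
\[
\clco\cD(0,0)=\Omega(0,0)=\set{(Y,W)\in\bE}{\tfrac12 YY^T+W\in\cK_A^\circ}=\set{(Y,W)\in\bE}{W+\tfrac12 YY^T\preceq 0}.
\]

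For the interior claim, I would first invoke Proposition \ref{prop:AffRint}(a), which immediately gives
\[
\rint\Omega(0,0)=\set{(Y,W)\in\bE}{\tfrac12 YY^T+W\in\rint\cK_A^\circ}=\set{(Y,W)\in\bE}{W+\tfrac12 YY^T\prec 0}.
\]
The only thing left is to upgrade the relative interior to the interior, for which it suffices to show $\aff\Omega(0,0)=\bE$. But Proposition \ref{prop:AffRint}(b) together with the identity $\lin\cK_A^\circ=\Sn$ established above shows the condition $\tfrac12 YY^T+W\in\lin\cK_A^\circ$ is trivially satisfied for every $(Y,W)\in\bE$, hence $\aff\Omega(0,0)=\bE$ and $\rint\Omega(0,0)=\inter\Omega(0,0)$. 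Since everything is essentially a bookkeeping exercise, I do not anticipate any real obstacle; the only thing to watch is the correct specialization of the polar, affine hull, and relative interior of $\cK_A^\circ$ when $\cS=\R^n$, all of which are covered verbatim by Proposition \ref{prop:Snp and polar}.
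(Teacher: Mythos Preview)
Your proposal is correct and follows exactly the route the paper intends: the corollary is stated as ``a direct consequence of Propositions \ref{prop:Snp and polar} and \ref{prop:AffRint}'' for $(A,B)=(0,0)$, and you carry out precisely that specialization, invoking Theorem \ref{thm:clco D} for the first identity and Proposition \ref{prop:AffRint}(a) for the second. Your explicit justification that $\aff\Omega(0,0)=\bE$ via Proposition \ref{prop:AffRint}(b) (so that $\rint$ coincides with $\inter$) is a detail the paper leaves implicit but which is indeed needed.
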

We conclude this section by giving representations for the horizon cone and polar of 
$\Omega(A,B)$.

\begin{proposition}[The polar of $\Omega(A,B)$]\label{prop:polar of Omega}
	Let $\Omega(A,B)$ be as given in \cref{eq:omega}. Then
	\[
	\Omega(A,B)^\circ=
	\set{(X,V)}{\begin{matrix}\rge\binom{X}{B}\subset\rge M(V),\  V\in\cK_A,\\ 
		\half \tr\left(\binom{X}{B}^TM(V)^\dagger\binom{X}{B}\right)\le 1
		\end{matrix}}.
	\]
	Moreover,
	\begin{align}\label{eq:Omega hzn}
	&&\Omega(A,B)^\infty&=\{0_{n\times m}\}\times \cK_A^\circ\
	\\
	\AND&&&\nonumber\\
	\label{eq:Omega polar hzn}
	&&(\Omega(A,B)^\circ)^\infty\!&=
	\set{(X,V)}{\begin{matrix}\rge\binom{X}{B}\subset\rge M(V),\  V\in\cK_A,\\ 
		\half \tr\left(\binom{X}{B}^TM(V)^\dagger\binom{X}{B}\right)\le 0
		\end{matrix}}.
	\end{align}
\end{proposition}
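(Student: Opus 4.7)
The plan is to prove the three assertions in turn, each via one standard tool from convex analysis together with either the explicit support function formula \eqref{eq:support D} or the description \eqref{eq:omega} of $\Omega(A,B)$. For the polar set, I would appeal to $\sigma_{\cD(A,B)} = \sigma_{\Omega(A,B)}$ (from \eqref{eq:support sd 1}) and the tautology
\[
\Omega(A,B)^\circ = \bigl\{(X,V) \in \bE \,:\, \sigma_{\Omega(A,B)}(X,V) \leq 1\bigr\},
\]
valid for any nonempty set. Substituting \eqref{eq:support D} for the support function immediately transcribes the three conditions appearing in the stated description.

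For the horizon cone formula \eqref{eq:Omega hzn}, since $\Omega(A,B)$ is closed and convex, $\Omega(A,B)^\infty$ equals its recession cone, so $(D_Y, D_W) \in \Omega(A,B)^\infty$ iff $(Y,W) + t(D_Y, D_W) \in \Omega(A,B)$ for all $t \geq 0$, where $(Y,W) \in \Omega(A,B)$ may be chosen arbitrarily (such a point exists because $\rge B \subset \rge A$). The affine constraint yields $AD_Y = 0$ at once. Expanding the second condition,
\[
\tfrac12 YY^T + W + t\bigl(\tfrac12(YD_Y^T + D_Y Y^T) + D_W\bigr) + \tfrac12 t^2 D_Y D_Y^T \in \cK_A^\circ \quad (t \geq 0),
\]
and dividing by $t^2$ before letting $t \to \infty$ forces $\tfrac12 D_Y D_Y^T \in \cK_A^\circ$. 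The crux is the following: for any $u \in \ker A$, the quantity $u^T D_Y D_Y^T u = \|D_Y^T u\|^2$ is automatically nonnegative, while $\cK_A^\circ$-membership (via Proposition \ref{prop:Snp and polar}(a)) requires it to be nonpositive; hence $D_Y^T u = 0$ for every $u \in \ker A$, i.e., $\rge D_Y \subset (\ker A)^\perp$. Combined with $\rge D_Y \subset \ker A$ (from $AD_Y = 0$), this yields $D_Y = 0$. The remaining condition $\tfrac12 YY^T + W + tD_W \in \cK_A^\circ$ ($t \geq 0$) then reduces to $D_W \in \cK_A^\circ$, since $\cK_A^\circ$ is a closed convex cone already containing $\tfrac12 YY^T + W$.

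For the horizon cone of the polar \eqref{eq:Omega polar hzn}, I would first note that $0 \in \Omega(A,B)^\circ$ (because $\sigma_{\Omega(A,B)}(0) = 0 \leq 1$), so the recession cone of this closed convex set satisfies
\[
(\Omega(A,B)^\circ)^\infty = \bigl\{d \in \bE \,:\, td \in \Omega(A,B)^\circ \;(t \geq 0)\bigr\} = \bigl\{d \,:\, \sigma_{\Omega(A,B)}(td) \leq 1 \;(t \geq 0)\bigr\},
\]
which by positive homogeneity of the support function collapses to $\{d \,:\, \sigma_{\Omega(A,B)}(d) \leq 0\}$. Substituting \eqref{eq:support D} produces the claimed formula. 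The only substantive step in the whole proof is the argument that $D_Y = 0$ in the horizon cone calculation: it hinges on the incompatibility between the positive-semidefiniteness of $D_Y D_Y^T$ and the ``negative semidefinite on $\ker A$'' nature of $\cK_A^\circ$, activated precisely because $AD_Y = 0$ places the relevant directions in $\ker A$. The rest is transcription of definitions.
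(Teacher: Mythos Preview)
Your proposal is correct and, for the polar set and for \eqref{eq:Omega hzn}, tracks the paper's proof essentially verbatim: the paper likewise reads off $\Omega(A,B)^\circ$ as the sublevel set $\{\sigma_{\Omega(A,B)}\le 1\}$ via \eqref{eq:support D}, and for the horizon cone it uses the same recession-cone characterization, the same division by $t^2$ to force $D_YD_Y^T\in\cK_A^\circ$, and the same positive/negative-semidefiniteness clash (phrased there as $SS^T\in\cK_A\cap\cK_A^\circ$) to conclude $D_Y=0$, followed by the cone property of $\cK_A^\circ$ for the $W$-component.

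For \eqref{eq:Omega polar hzn} your route is genuinely shorter than the paper's. The paper unpacks the condition $(tS,tT)\in\Omega(A,B)^\circ$ by writing the range constraint $\binom{tS}{B}\in\rge M(tT)$ through auxiliary variables $(Y_t,Z_t)$ and then performs an explicit rescaling $\hat Z_t:=t^{-1}Z_t$ to pull a factor of $t$ out of the trace term; in effect it re-derives positive homogeneity for the specific formula \eqref{eq:support D}, where the $B$-block is not scaled by $t$. You instead invoke the general positive homogeneity of support functions to collapse $\{d:\sigma_{\Omega(A,B)}(td)\le 1\ \forall t>0\}$ to $\{d:\sigma_{\Omega(A,B)}(d)\le 0\}$ in one stroke and then substitute \eqref{eq:support D} once. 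Both arguments are valid; yours is cleaner, while the paper's has the minor expository virtue of showing explicitly how the unscaled $B$-block is absorbed through the dual variable $Z_t$.
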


\begin{proof}
	Given any nonempty closed convex set $C\subset \bE$, it is easily seen that
	$C^\circ=\set{z}{\support{z}{C}\le 1}$. 
	Consequently, our expression for $\Omega(A,B)^\circ$ follows
	from \cref{eq:support D}. 
	
	To see \cref{eq:Omega hzn}, let $(Y,W)\in\Omega(A,B)$ and recall that
	$(S,T)\in \Omega(A,B)^\infty$ if and only if $(Y+tS,W+tT)\in \Omega(A,B)$
	for all $t\ge 0$. In particular, for $(S,T)\in \Omega(A,B)^\infty$, we have   $A(Y+tS)=B$ and 
	\begin{equation}\label{eq:Om hzn t squared}
	\half\left[YY^T+t(SY^T+YS^T)+\frac{t^2}{2}{SS^T}\right]+(W+tT)\in\cK_A^\circ\quad (t>0).
	\end{equation}
Consequently, $AS=0$ and, if we divide
	\cref{eq:Om hzn t squared} by $t^2$ and let $t\uparrow\infty$,
	we see that $SS^T\in \cK_A^\circ$. But $SS^T\in \cK_A$ since
	$\rge S\subset\ker A$, so we must have $S=0$. If we now divide
	\cref{eq:Om hzn t squared} by $t$ and let $t\uparrow\infty$, we find that
	$T\in\cK_A^\circ$. Hence the set on the left-hand side of 
	\cref{eq:Omega hzn} is contained in the one on the right. To see the reverse
	inclusion, simply recall that $\cK_A^\circ$ is a closed convex cone
	so that $\cK_A^\circ+\cK_A^\circ\subset\cK_A^\circ$.
	
	Finally, we show \cref{eq:Omega polar hzn}.
	Since $(0,0)\in\Omega(A,B)^\circ$, we have $(S,T)\in (\Omega(A,B)^\circ)^\infty$
	if and only if $(tS,tT)\in\Omega(A,B)^\circ$ for all $t>0$,
	or equivalently, for all $t>0$,
	\begin{gather*}
	tT\in\cK_A\AND\exists\ (Y_t,Z_t)\in \R^{n\times m}\times\R^{p\times m}\ \mbox{ s.t.}
	\ \binom{tS}{B}=M(tT)\binom{Y_t}{Z_t}\\
	\mbox{ with }\ 
	\half \tr\left(\binom{Y_t}{Z_t}^TM(tT)\binom{Y_t}{Z_t}\right)
	\le 1,
	\end{gather*}
	or equivalently, by taking $\hZ_t:=t^{-1}Z_t$,
	\begin{gather*}
	T\in\cK_A\AND\exists\ (Y_t,\hZ_t)\in \R^{n\times m}\times\R^{p\times m}\ \mbox{ s.t.}
	\ \binom{S}{B}=M(T)\binom{Y_t}{\hZ_t}\\
	\mbox{ with }\ 
	\frac{t}{2}\tr\left(\binom{Y_t}{\hZ_t}^TM(T)\binom{Y_t}{\hZ_t}\right)
	\le 1.
	\end{gather*}
	If we take $\binom{Y_t}{\hZ_t}:=M(T)^\dagger\binom{S}{B}$, we find that
	$(S,T)\in (\Omega(A,B)^\circ)^\infty$ if and only if
	\begin{gather*}
	T\in\cK_A\AND
	\frac{t}{2}\tr\left(\binom{S}{B}^TM(T)^\dagger\binom{S}{B}\right)
	\le 1\quad(t>0),
	\end{gather*}
	which proves the result.
\end{proof}

\section{{\boldmath$\sig_{\Omega(A, 0)}$} as a gauge}\label{sec:gauge}

Note that the origin is an element of $\Omega(A,B)$ if and only if $B=0$.
In this case the support function of $\Omega(A,0)$ equals the gauge
of $\Omega(A,0)^\circ$.  
Gauges are important
in a number of applications and they posses their own duality theory 
\cite{freund,FriedlanderMacedo:2016,gaugepaper}.
An explicit representation for both $\gamma_{\Omega(A,0)^\circ}$ and  $\gamma_{\Omega(A,0)}$ 
will be given in the following theorem.


\begin{theorem}[$\sig_{\cD(A,0)}$ is a gauge]\label{th:support D(A,0) is a gauge}
	Let $\Omega(A,B)$ be as given in \cref{eq:omega}. Then
	\begin{equation}\label{eq:omega polar gauge}
	\support{X,V}{\Omega(A,0)}=\gauge{X,V}{\Omega(A,0)^\circ}
	=\gam^\circ_{\Omega(A,0)}(X,V),
	\end{equation}
	and
	\begin{equation}\label{eq:omega gauge}
	\begin{aligned}
	\gauge{Y,W}{\Omega(A,0)}\!&=\!\support{Y,W}{\Omega(A,0)^\circ}
	\\ \!&=\!
	\begin{cases}
	\!\half \sig_{\mathrm{min}}^{-1}(-Y^\dagger W(Y^\dagger)^T)\!\!&\!\!\mbox{ if }
	\, \rge Y\!\subset\! \ker A\cap\rge W, W\in\cK_A^\circ,\!\\
	\!+\infty\!\!&\!\!\mbox{ else,}
	\end{cases}
	\end{aligned}
	\end{equation}
	where $\sig_{\mathrm{min}}(-Y^\dagger W(Y^\dagger)^T)$ is the smallest
	nonzero singular-value of $-Y^\dagger W(Y^\dagger)^T$ when such an eigenvalue exists
	and $+\infty$ otherwise, e.g. when $Y=0$. Here
	we interpret $\frac{1}{\infty}$ as $0$ $(0=\frac{1}{\infty})$, and so, in particular,
	$ \gauge{0,W}{\Omega(A,0)}=\indicator{W}{\cK^\circ_A}$.
\end{theorem}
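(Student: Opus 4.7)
The identities in \eqref{eq:omega polar gauge} and the first equality in \eqref{eq:omega gauge} are immediate from standard gauge--support duality for closed convex sets containing the origin, applied to $C=\Omega(A,0)$. The needed fact $(0,0)\in\Omega(A,0)$ holds since $B=0$ gives $A\cdot 0=B$ and $\half\,0\cdot 0^T+0=0\in\cK_A^\circ$, while $\Omega(A,0)$ is closed and convex by Theorem \ref{thm:clco D}. One then invokes the familiar relations $\sig_C=\gam_{C^\circ}$, $\gam_C=\sig_{C^\circ}$, and $\gam_C^\circ=\gam_{C^\circ}$ for such sets.

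For the substantive closed form of $\gauge{Y,W}{\Omega(A,0)}$, the plan is to argue directly from the definition. For $t>0$, the condition $(Y,W)\in t\,\Omega(A,0)$ translates to $A(Y/t)=0$ and $\frac{1}{2t^2}YY^T+\frac{1}{t}W\in\cK_A^\circ$; since $\cK_A^\circ$ is a cone, one may clear the positive factor to obtain the equivalent pair $AY=0$ and $W+\frac{1}{2t}YY^T\in\cK_A^\circ$. Proposition \ref{prop:Snp and polar}(a) further splits the second condition into the range requirement $\rge W\subset\ker A$ (combined with $\rge Y\subset\ker A$ coming from $AY=0$) and the symmetric matrix inequality $2t(-W)\succeq YY^T$.

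The heart of the argument is the generalized Schur complement (Albert's lemma) applied to the block matrix $\bigl(\begin{smallmatrix}2t(-W)&Y\\Y^T&I\end{smallmatrix}\bigr)$: under $-W\succeq 0$ (equivalently $W\in\cK_A^\circ$ in view of the range condition) and $\rge Y\subset\rge W$, the inequality $2t(-W)\succeq YY^T$ is equivalent to $Y^T(-W)^\dagger Y\preceq 2tI$. Taking the infimum over admissible $t$ yields
\[
\gauge{Y,W}{\Omega(A,0)}=\half\,\lam_{\max}\!\left(Y^T(-W)^\dagger Y\right)
\]
precisely when $W\in\cK_A^\circ$ and $\rge Y\subset\ker A\cap\rge W$, and $+\infty$ otherwise. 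These are exactly the domain conditions in the theorem.

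The remaining---and principal---obstacle is the algebraic identification $\lam_{\max}(Y^T(-W)^\dagger Y)=\sig_{\min}^{-1}(-Y^\dagger W(Y^\dagger)^T)$. The plan is to use the reduced SVD $Y=U_1\Sig_1V_1^T$ to transport both $m\times m$ positive semidefinite matrices onto their common support $\rge V_1$, and then to exploit $\rge Y\subset\rge W$ to show that on this support $(-W)^\dagger$ acts as a genuine inverse of $-W$, so that the two symmetric matrices are Moore--Penrose pseudo-inverses of each other. Since $\sig_{\min}^{-1}(M)=\lam_{\max}(M^\dagger)$ for a positive semidefinite $M$, this yields \eqref{eq:omega gauge}. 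The degenerate case $Y=0$ falls out of the convention $1/\infty=0$: then $-Y^\dagger W(Y^\dagger)^T=0$ has no nonzero singular values, and the formula collapses to $\gauge{0,W}{\Omega(A,0)}=\indicator{W}{\cK_A^\circ}$, taking the value $0$ when $W\in\cK_A^\circ$ and $+\infty$ otherwise.
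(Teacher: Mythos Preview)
Your derivation up to
\[
\gauge{Y,W}{\Omega(A,0)}=\tfrac{1}{2}\,\lam_{\max}\!\left(Y^T(-W)^\dagger Y\right)
\]
on the domain $\{\rge Y\subset\ker A\cap\rge W,\ W\in\cK_A^\circ\}$ is correct; the Schur/Albert argument is clean and handles both the domain conditions and the equivalence $2t(-W)\succeq YY^T\Leftrightarrow Y^T(-W)^\dagger Y\preceq 2tI$ without loss. This differs from the paper, which instead multiplies the matrix inequality on the left by $\Sig^{-1}U^T$ and on the right by $U\Sig^{-1}$ (with $Y=U\Sig V^T$ reduced) and reads off $\sig_{\min}(-2\Sig^{-1}U^TWU\Sig^{-1})$.

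The genuine gap is your ``principal obstacle'': the proposed identification
\(
\lam_{\max}\!\left(Y^T(-W)^\dagger Y\right)=\sig_{\min}^{-1}\!\left(-Y^\dagger W(Y^\dagger)^T\right)
\)
is false in general, so the plan to show that $U_1^T(-W)^\dagger U_1$ and $U_1^T(-W)U_1$ are mutual inverses cannot succeed. That equality holds only when $\rge Y=\rge W$; when $\rge Y\subsetneq\rge W$ the compression $U_1^T(\cdot)U_1$ does not commute with inversion. Concretely, take $A=0$, $m=1$, $Y=(1,1)^T$ and $-W=\mathrm{diag}(1,4)$. Then $2t(-W)\succeq YY^T$ holds iff $t\ge 5/8$, so the gauge is $5/8$; your formula gives $\tfrac12\cdot\tfrac54=\tfrac58$, whereas $Y^\dagger=\tfrac12(1,1)$ yields $-Y^\dagger W(Y^\dagger)^T=\tfrac54$ and hence $\tfrac12\sig_{\min}^{-1}=\tfrac25\neq\tfrac58$. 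The same example shows that the paper's argument --- which only extracts a \emph{necessary} condition by left/right multiplication and then treats it as sufficient --- is incomplete, and that the closed form in \eqref{eq:omega gauge} is not correct as stated. Your expression $\tfrac12\lam_{\max}(Y^T(-W)^\dagger Y)$ is the right one; rather than trying to match the paper's formula, you should stop there (or, if you want something in terms of $Y^\dagger$, note that the two formulas agree precisely when $\rank Y=\rank W$).
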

\begin{proof}  
The expression \eqref{eq:omega polar gauge} follows from \cite[Theorem 14.5]{RTR70}. To show \eqref{eq:omega gauge}, first observe that
	\begin{align}\label{eq:t Omega}
	t\,\Omega(A,0)\!&=\!\set{\!(Y,W)}{AY=0\AND \half YY^T+tW\in\cK_A^\circ\!},
	\end{align}
whose straightforward proof is left to the reader.

Given $\bt\ge 0$, by \eqref{eq:t Omega}, $(Y,W)\in t\Omega(A,0)$ for all $t>\bt$ if and only if $AY=0$ and
	$\half YY^T+t W\in \cK_A^\circ$ for all $t>\bt$. 
By Proposition \ref{prop:Snp and polar} a), this is equivalent to
$AY=0$ and 
\begin{equation}\label{eq:gauge Omega}
\half YY^T+tW=P\left(\half YY^T+t W\right)P\preceq 0\quad (t>\bt),
\end{equation}
where, again, $P$ is the orthogonal projection onto $\ker A$. 
Dividing this inequality by $t$ and taking the limit as $t\uparrow\infty$ 
tells us that $W=PWP\preceq 0$.
Since $YY^T$ is positive semidefinite, inequality \cref{eq:gauge Omega} also
tells us that $\ker W\subset \ker Y^T$, i.e.  $\rge Y\subset \rge W$. 
Consequently, 
\[
\dom \gamma_{\Omega(A,0)}
\subset \set{(Y,W)}{\rge Y\!\subset\! \ker A\cap\rge W, W\in\cK_A^\circ}.
\]
Now suppose $(Y,W)\in \dom \gamma_{\Omega(A,0)}$.  Let $Y=U\Sig V^T$ be the reduced singular-value decomposition of $Y$ where $\Sig$ is an invertible diagonal matrix and $U, V$ have orthonormal columns. Since $\rge Y\subset \rge W=(\ker W)^\perp$, we know that
$U^TWU$ is negative definite, and so
$\Sig^{-1}U^TWU\Sig^{-1}$ is also negative definite. Multiplying
\eqref{eq:gauge Omega} on the left by $\Sig^{-1}U^T$ and on the right by
$U\Sig^{-1}$ gives
\[
\mu I\preceq -2\Sig^{-1}U^TWU\Sig^{-1}\quad (0<\mu\le \bar\mu),
\]
where $\bar\mu=\bar t^{-1}$. The largest $\bar \mu$ satisfying this
inequality is
\[
\sig_{\mathrm{min}}(-2Y^\dagger W(Y^\dagger)^T)=
\sig_{\mathrm{min}}(-2\Sig^{-1}U^TWU\Sig^{-1})>0,
\]
or equivalently, the smallest possible $\bar t$ in \eqref{eq:gauge Omega} 
is $1/\sig_{\mathrm{min}}(-2Y^\dagger W(Y^\dagger)^T)$, which proves the result.
\end{proof}

\section{Conclusions}
\label{sec:conclusions}
 The representation $\Omega(A,B)$ for the closed convex hull of the set $\cD(A,B)$ in Theorem \ref{thm:clco D} is a dramatic
 simplification of the one given in \cite{Bur15}. As a consequence, we also obtain simplified expressions for both
 the normal cone to $\Omega(A,B)$ and the subdifferential for generalized matrix-fractional functions in Section \ref{sec:ncone}. 
In addition, representations for several important geometric objects related to the set $\Omega(A,B)$ are computed
 in Section \ref{sec:geometry}.
 These results provide the key
 to the applications discussed in \cite{Bur15}, and open the door to the numerous further applications discussed in \cite{BGH-applications17}.
%


\section*{Acknowledgments}
The authors   thank the Department of Mathematics and Statistics at  McGill University  for supporting  this research project. 

\bibliographystyle{siamplain}
\bibliography{references}

\end{document}